\newcommand{\C}{\mathbb C }
\documentclass[12pt]{amsart}
\usepackage{amssymb}
\newtheorem{thm}{Theorem}[section]
\newtheorem{cor}[thm]{Corollary}

\theoremstyle{definition}

\theoremstyle{remark}
\newtheorem{rem}[thm]{Remark}
\numberwithin{equation}{section}




\newcommand{\R}{\mathbb R}





\begin{document}

\title[]{Hardy's theorem for compact Lie groups}

\author{S. Thangavelu}

\address{department of mathematics,
Indian Institute of Science, Bangalore - 560 012, India}

\email{veluma@math.iisc.ernet.in}


\keywords{ Fourier transform, compact Lie groups, heat kernel, Segal-Bargmann transform, Hardy's theorem .}
\subjclass[2010] {Primary: 43A85, Secondary: 22E30, 46E35}


\begin{abstract} We show that Hardy's uncertainty principle can be reformulated in such a way that it has an analogue even for compact Lie groups and symmetric spaces of compact type.
 
\end{abstract}

\maketitle

 \section{Introduction}

By Hardy's theorem we refer to the following result  proved by Hardy \cite{H}  in 1933 on Fourier transform pairs: if a nontrivial function 
$ f $ and its Fourier transform $ \hat{f} $ on $ \R $ satisfies the conditions
$$ |f(x)| \leq C e^{-ax^2},~~~~~~|\hat{f}(y)| \leq C e^{-b y^2} $$  for $ a, b >0 $ then necessarily $ ab \leq 1/4.$ In other words, if the 
above estimates are valid for any pair  with $ ab > 1/4 $ then $ f $ has to be identically zero. An analogue of this result is true on $ \R^n $ also and 
it is informative to state the result in terms of the heat kernel
$$ p_t(x) = (2\pi t)^{-n/2} e^{-\frac{1}{4t}|x|^2} ,~~~ x \in \R^n $$ associated to the Laplacian $ \Delta $ on $ \R^n.$ If 
$$ \hat{f}(\xi) = (2\pi)^{-n/2} \int_{\R^n} f(x) e^{-i x \cdot \xi} dx $$ then it is well known that $ \hat{p_t}(\xi) = e^{-t|\xi|^2}.$ Hardy's theorem on $ \R^n $ takes the following form: if 
 \begin{equation}
 |f(x)| \leq C p_s(x),~~~~~~|\hat{f}(\xi)| \leq C \hat{p_t}(\xi) 
\end{equation}  with $ 0 < s < t $ then $ f =0;$ when $ s =t , f =C p_t.$\\

The heat kernel version of Hardy's theorem has been extended to Fourier transforms on Lie groups such as non-compact semisimple Lie groups, Heisenberg groups and also for the Helgason Fourier transform on non-compact Riemannian symmetric spaces. In all these cases if $ p_t $ is the heat kernel associated to the suitable Laplacian (or sublaplacian) on the group then estimates of the form (1.1) on $ f $ and its Fourier transform with $ s < t $ always lead to the conclusion 
$ f = 0.$ See \cite{T1} and the references there for the history of Hardy's theorem in various setups. The equality case $ s = t $ remains open in many situations and fails to be true (e.g. $G = SL(2,\R) $) in some cases.\\

But what about Hardy's theorem on compact Lie groups? Well, when $ K $ is a such a group with  $ p_t $ the heat kernel associated to the Laplacian 
$ \Delta $, the condition $ |f(k)| \leq C p_s(k) $ does not say much as any bounded function satisfies the above condition for any $ s > 0 $ since $ K $ is compact and $ p_s(k) > 0.$  Eventhough the condition '$|\hat{f}(\xi)| \leq C \hat{p_t}(\xi) $' has a natural analogue, due to the lack of information from 
$ |f(k)| \leq C p_s(k) $, it looks as though Hardy's theorem has no analogue on compact Lie groups.\\

Nevertheless, let us turn around and have a fresh look at Hardy's theorem- or rather the proof of the theorem on $\R.$ The second condition, namely the
 one on $ \hat{f} $ allows us to extend $ f $ as an entire function to the whole of $ \C $ and we also get the estimate
$$  |f(x+iy)|  \leq C e^{\frac{1}{4b}y^2} ,~~~~~ x, y \in \R.$$ The entire function $ f(z) $ when restricted to $ \R $ satisfies the Gaussian decay 
$ |f(x)| \leq C e^{-ax^2} .$ By considering the equality case $ ab = 1/4 $ Hardy concluded by a clever application of Phragmen-Lindelof theorem that
 $ f(z) = C e^{-az^2} $ and the case $ ab > 1/4 $ follows from this immediately. Thus we see that the growth of the entire function $ f $ and the decay of 
the Fourier transform of its restriction to $ \R $ are related.\\

Viewing Hardy's theorem in the light of the above observation allows us to formulate a version for compact Lie groups.\\

\section{Hardy's theorem for compact Lie groups}
\setcounter{equation}{0}

Let $ K $ be any compact connected Lie group with Lie algebra $ \bf{k}.$ Any inner product on $ \bf{k} $ which is invariant under the adjoint action of $ K $ determines a biinvariant  Riemannian metric on $ K.$ Let $ \Delta$ be the Laplace-Beltrami operator associated  with this Riemannian metric and let $ p_t $ stand for the kernel of the semigroup $ e^{-t\Delta} $ generated by $ \Delta.$ We fix a Haar measure  $ dk $ on $ K.$ Let $ \hat{K} $ stand for the unitary dual of $ K.$ 
Then for any $ \pi \in \hat{K} $ the matrix coefficients $ \pi_{ij}(k) $ of $ \pi $ are eigenfunctions of $\Delta $, see Stein \cite{S}. In particular, the character 
$ \chi_\pi $ is an eigenfunction of $ \Delta.$  Let us denote the corresponding eigenvalues by $ \lambda_\pi^2 $ and let $ d_\pi $ stand for the dimension of the representation $ \pi.$  The heat kernel $ p_t $ has the following expansion:
$$ p_t(k)  = \sum_{\pi \in \hat{K}} d_\pi e^{-\lambda_\pi^2 t} \chi_\pi(k).$$ Note that for $ f \in L^2(K) $ the function $ u(k,t) = f*p_t(k) $ solves the heat equation 
$ \partial_t u(k,t) = -\Delta u(k,t) $ with initial condition $ u(k,0) = f(k).$\\

Let $ G = K_{\C} $ be the universal complexification of $ K $ with Lie algebra $ \bf{k}+i\bf{k}.$ Then $ K $ is a maximal compact subgroup of $ G $ and 
$ X = G/K $ is a non-compact Riemannian symmetric space.  Let $ \Delta_X $ be the Laplace-Beltrami operator on $ G/K $ with associated heat kernel $q_t(g).$ It is a $K-$biinvariant function on $G.$ By the polar decomposition, every element of $ G $ can be written as $ g = k e^{iY} $ where $ Y \in \bf{k}.$ By $ |Y| $ we denote the norm of $ Y $ defined by the $Ad$-$K $ invariant inner product on $ \bf{k}.$ We are now ready state a version of Hardy's theorem for $ K.$\\

\begin{thm} For a  function $ f \in L^2(K) $ let us set  $$ b = \sup \{ t \geq 0:  \|\pi(f)\|_{HS} \leq C e^{- t\lambda_\pi^2}, \forall \pi  \in \hat{K} \}.$$ Suppose 
$ f $ has a holomorphic extension to $ G $ and satisfies the estimate $$ |f(k e^{iY})|^2 \leq C p_{2a}(e^{2iY}) $$ for all $ k \in K $ and $ Y \in \bf{k}.$ Then $ f = 0 $ whenever $ a > b.$

\end{thm}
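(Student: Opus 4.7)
The plan is to turn the pointwise hypothesis on $f$ into a Fourier--side decay estimate for $\|\pi(f)\|_{HS}$ that forces $b\geq a$, contradicting $a>b$. The engine is a \emph{Gutzmer--type identity}. Expand $f$ via the holomorphic Peter--Weyl series, $f(g)=\sum_{\pi}d_{\pi}\,\mathrm{tr}(\pi(f)\,\pi(g))$ (with the convention $\pi(f)=\int_{K}f(k)\pi(k)^{*}\,dk$). Apply Plancherel in $k_1$ to the function $k_1\mapsto f(k_1 e^{iY}k_2)$, and then average over $k_2$ using Schur's identity $\int_{K}\pi(k)^{*}X\pi(k)\,dk=\frac{\mathrm{tr}(X)}{d_{\pi}}I_{V_{\pi}}$, together with the observation that $\pi(e^{iY})$ is positive Hermitian for $Y\in\mathbf{k}$ (so $\pi(e^{iY})^{*}\pi(e^{iY})=\pi(e^{2iY})$). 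The result is the identity
$$\int_{K}\!\!\int_{K}|f(k_1 e^{iY}k_2)|^{2}\,dk_1\,dk_2=\sum_{\pi\in\hat K}\|\pi(f)\|_{HS}^{\,2}\,\chi_{\pi}(e^{2iY}),\qquad Y\in\mathbf{k}.$$

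Because $p_{2a}$ is a class function and $k_1 e^{iY}k_2=(k_1 k_2)e^{i\mathrm{Ad}(k_2^{-1})Y}$, the hypothesis upgrades to $|f(k_1 e^{iY}k_2)|^{2}\leq Cp_{2a}(e^{2iY})$ for all $k_1,k_2\in K$; plugging this in gives
$$\sum_{\pi}\|\pi(f)\|_{HS}^{\,2}\chi_{\pi}(e^{2iY})\leq C\sum_{\pi}d_{\pi}e^{-2a\lambda_{\pi}^{\,2}}\chi_{\pi}(e^{2iY}),\qquad Y\in\mathbf{k}.$$
Since $\pi(e^{2iY})$ is positive definite every character value $\chi_{\pi}(e^{2iY})$ is nonnegative, so every summand on the left is nonnegative and I can drop all but one, obtaining the key single--term bound
$$\|\pi_{0}(f)\|_{HS}^{\,2}\,\chi_{\pi_{0}}(e^{2iY})\leq Cp_{2a}(e^{2iY})\qquad\forall\,\pi_{0}\in\hat K,\ Y\in\mathbf{k}.$$

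Finally I would specialise $Y$ to extract the sharpest exponential decay for each $\pi_{0}$. Fix a maximal torus $\mathbf{t}\subset\mathbf{k}$, let $H_{\pi_{0}}\in\mathbf{t}$ be the element dual to the highest weight $\lambda_{\pi_{0}}$ under the inner product (so $|H_{\pi_{0}}|=|\lambda_{\pi_{0}}|$ and $\lambda_{\pi_{0}}(H_{\pi_{0}})=|\lambda_{\pi_{0}}|^{2}$), and take $Y=-tH_{\pi_{0}}$ with $t>0$. Diagonalising in the weight basis yields the lower bound $\chi_{\pi_{0}}(e^{-2itH_{\pi_{0}}})\geq e^{2t|\lambda_{\pi_{0}}|^{2}}$ (the highest--weight term), while the fact that every weight $\mu$ of $V_{\pi'}$ satisfies $|\mu|\leq|\lambda_{\pi'}|$ combined with Weyl's polynomial dimension bound $d_{\pi'}\leq C(1+|\lambda_{\pi'}|)^{N}$ yields, via a Laplace--type estimate for the sum,
$$p_{2a}(e^{-2itH_{\pi_{0}}})\leq P(t,|\lambda_{\pi_{0}}|)\,e^{t^{2}|\lambda_{\pi_{0}}|^{2}/(2a)}$$
with $P$ a polynomial. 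Substituting and minimising $t^{2}|\lambda_{\pi_{0}}|^{2}/(2a)-2t|\lambda_{\pi_{0}}|^{2}$ at $t=2a$ gives $\|\pi_{0}(f)\|_{HS}\leq C_{\epsilon}e^{-(a-\epsilon)|\lambda_{\pi_{0}}|^{2}}$ for every $\epsilon>0$, and since $\lambda_{\pi_{0}}^{2}$ and $|\lambda_{\pi_{0}}|^{2}$ differ only by a lower--order $O(|\lambda_{\pi_{0}}|)$ term (Freudenthal), this rewrites as $\|\pi_{0}(f)\|_{HS}\leq C'_{\epsilon}e^{-(a-\epsilon)\lambda_{\pi_{0}}^{\,2}}$. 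Hence $b\geq a-\epsilon$ for every $\epsilon>0$, so $b\geq a$, contradicting $a>b$; thus $f\equiv 0$. The main obstacle is the Laplace/Gaussian--sum estimate for $p_{2a}(e^{-2itH_{\pi_{0}}})$, which must be uniform in both $t$ and $|\lambda_{\pi_{0}}|$; fortunately the $\epsilon$--slack in the final optimisation absorbs all polynomial losses.
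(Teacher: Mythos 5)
Your proposal is correct in outline, but it takes a genuinely different route from the paper. The paper never extracts the decay of $\|\pi(f)\|_{HS}$ by testing against individual representations: it instead proves the weighted square-integrability $\int_G |f(g)|^2 q_{t/2}(g)\,dg<\infty$ for every $t<a$ (using Gangolli's explicit formula for $q_t$, Hall's Segal--Bargmann isometry applied to $p_a$, and Lassalle's Gutzmer formula applied to $p_a$ to identify $\int_{K\times K}|p_a(u e^{iY}v)|^2\,du\,dv$ with $p_{2a}(e^{2iY})$), and then invokes the converse direction of Hall's theorem to conclude $f=h*p_t$ with $h\in L^2(K)$, from which $\|\pi(f)\|_{HS}=e^{-t\lambda_\pi^2}\|\pi(h)\|_{HS}$ and $t\le b$. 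You instead apply the Gutzmer identity \emph{directly to $f$}, use positivity of $\chi_\pi(e^{2iY})$ to isolate a single term, and then run a Laplace-type optimisation along the extremal direction $Y=-tH_{\pi_0}$. Your route is more elementary and more quantitative --- it dispenses with Hall's theorem and Gangolli's formula entirely and produces the bound $\|\pi(f)\|_{HS}\le C_\epsilon e^{-(a-\epsilon)\lambda_\pi^2}$ explicitly --- whereas the paper's route yields the structural statement $f=h*p_t$ (which is what Remark 2.4 uses to discuss the equality case $a=b$) and transfers verbatim to compact symmetric spaces via Stenzel's theorem. Two caveats on your write-up. First, the identity $\int_{K\times K}|f(k_1e^{iY}k_2)|^2\,dk_1dk_2=\sum_\pi\|\pi(f)\|_{HS}^2\chi_\pi(e^{2iY})$ for a general holomorphic $f$ on $K_{\C}$ is not a purely formal Plancherel computation: the term-by-term integration presupposes the locally uniform convergence of the holomorphic Peter--Weyl (Laurent) series of $f$ on $G$, which is precisely Lassalle's theorem; you should cite it rather than rederive it (alternatively, Bessel's inequality for the single $\pi_0$-coefficient suffices for your argument and needs only analytic continuation of $\int_K f(k_1g_0)\pi(k_1)^*dk_1=\pi(g_0)\pi(f)$ from $g_0\in K$). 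Second, the bound $p_{2a}(e^{-2itH_{\pi_0}})\le C_\delta e^{t^2|\lambda_{\pi_0}|^2/(2a-\delta)}$ requires splitting $2a=(2a-\delta)+\delta$ so that $\sum_{\pi'}d_{\pi'}^2e^{-\delta|\lambda_{\pi'}|^2}$ converges (Weyl dimension bound plus the weight-lattice count); you flag this, and the $\delta$-loss is indeed absorbed in the final $\epsilon$, together with the $O(|\lambda_\pi|)$ discrepancy between $|\lambda_\pi|^2$ and the Casimir eigenvalue $\lambda_\pi^2$.
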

\begin{proof} Let  $ dg $ is the Haar measure on the complex group $ G.$ We first show that 
\begin{equation}
 \int_{G} |f(g)|^2  q_{t/2}(g) dg < \infty 
\end{equation}
 for every $ 0 < t < a.$  To prove this we make use of several results:  an explicit formula for the heat kernel $ q_t $ due to Gangolli \cite{G}, a theorem of Hall on Segal-Bargmann transforms \cite{H1} and a Gutzmer's formula due to Lassalle \cite{L1}, \cite{L2}.  In order state Gangolli's formula we need to recall some facts about $ G.$  Let $ T $ be a maximal 
torus in $ K $ with Lie algebra $ \bf{t} .$ We identify $ \bf{t}^* $ with $ \bf{k} $ via the inner product on $ \bf{k} $ restricted to $ \bf{t}.$ Let $ R $ be the set of all real roots, $ R^+ $ the set of all positive roots and $ \rho $ be the half sum of positive roots. \\

The polar decomposition of $ G $ reads as $ g = k_1 e^{iH} k_2 $ where 
$ k_1, k_2 \in K $ and $ H \in \bf{t}.$ Then for $ g = k_1 e^{iH} k_2 $ we have 
$$ q_t(g)  = (4\pi t)^{-n/2} e^{-4t|\rho|^2} e^{-\frac{1}{4t} |H|^2} \Pi_{\alpha \in R^+} \frac{(\alpha, H)}{ \sinh(\alpha, H) }.$$
If we let $ \Phi $ stand for the unique $Ad$-$K $ invariant function on $ \bf{k} $ which coicides with $\Pi_{\alpha \in R^+} \frac{(\alpha, H)}{ \sinh(\alpha, H) } $ on $ \bf{t} $ then we can write the formula as
$$ q_t(k e^{iY}) = (4\pi t)^{-n/2} e^{-4t|\rho|^2} e^{-\frac{1}{4t} |Y|^2} \Phi(Y),~~~~ k \in K, Y \in \bf{k}.$$  Since $ 0 < t < a, p_a(g) = p_{a-t}*p_t(g) $ and by the theorem of Hall we have
$$ \int_{G} |p_a(g)|^2 q_{t/2}(g) dg = c \int_{K} |p_{a-t}(k)|^2 dk.$$ 
The expression for the Haar measure on $ G $ in polar cordinates is given by $ dg = c_n \Phi(Y)^{-2} dk dY $ where $ dY $ is the Lebesgue measure on the Lie algebra $ \bf{k}. $  Integrating in polar coordinates, the above leads to 
\begin{equation}
 \int_{K} \int_{\bf{k}} |p_a(ke^{iY})|^2 q_{t/2}(ke^{iY}) \Phi(Y)^{-2} dY dk < \infty.
\end{equation}

Since the heat kernel $ q_t $ is  $ K-$biinvariant, we can rewrite the above as
$$ \int_{K} \int_{\bf{k}} \left(\int_{K\times K} |p_a(uke^{iY}v)|^2 du dv \right) q_{t/2}(ke^{iY}) \Phi(Y)^{-2} dY dk.$$
We can now evaluate the inner integral using Lassalle's formula. Recall that we can write $ Y = Ad(k_1)H, H \in \bf{t} $ for some $ k_1 \in K $ and hence the inner integral above is given by
$$  \int_{K\times K} |p_a(uke^{iY}v)|^2 du dv = \int_{K\times K} |p_a(ue^{iH}v)|^2 du dv $$ which by  Lassalle's formula (Theorem 1 in Section 9  of \cite{L2}) is equal to
$$ \sum_{\pi \in \hat{K}} e^{-2a\lambda_\pi^2}\chi_{\pi}(e^{2iH}) = p_{2a}(e^{2iY}) $$
where we have used the fact that $ p_a $ is a class function. The finiteness of the integral in (2.2) shows that
$$ \int_{K}\int_{\bf{k}} p_{2a}(e^{2iY}) q_{t/2}(k e^{iY}) \Phi(Y)^{-2} dk dY < \infty .$$ The hypothesis on $ f $ now shows that the integral in (2.1) is finite proving our claim. Once again we appeal to the theorem of Hall  to conclude that 
$ f = h*p_t $ for some $ h \in L^2(K).$ But this means that $ \pi(f) = e^{-t\lambda_\pi^2} \pi(h) $ and hence by the definition of $ b$ we conclude that 
$ t \leq b.$ As this is true for any $ t < a $ we get $ a \leq b $ which proves the theorem.

\end{proof}

By using known estimates on the heat kernel $ p_t(g) $ we can restate the above theorem in a more familiar form.

\begin{thm} For a  function $ f \in L^2(K) $ let us set  $$ b = \sup \{ t \geq 0:  \|\pi(f)\|_{HS} \leq C e^{- t\lambda_\pi^2}, \forall \pi  \in \hat{K} \}.$$ Suppose 
$ f $ has a holomorphic extension to $ G $ and satisfies the estimate 
$$ |f(k e^{iY})| \leq C e^{a|Y|^2} $$ for all $ k \in K $ and $ Y \in \bf{k}.$ Then $ f = 0 $ whenever $ a b < 1/4.$

\end{thm}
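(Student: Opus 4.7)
The plan is to follow the proof of Theorem 2.1 almost line for line, invoking the pointwise Gaussian bound only where Theorem 2.1 used its heat-kernel bound. The one essential input from Theorem 2.1's proof was the integrability
\begin{equation*}
 \int_G |f(g)|^2 q_{t/2}(g)\, dg < \infty,
\end{equation*}
valid there for every $0 < t < a$; once this is in hand, Hall's theorem forces $f = h * p_t$ with $h \in L^2(K)$, so $\pi(f) = e^{-t\lambda_\pi^2}\pi(h)$ and hence $t \leq b$. My task is therefore to verify this same integrability for every $t$ with $t < 1/(4a)$ under the hypothesis of Theorem 2.2; letting $t \uparrow 1/(4a)$ will give $b \geq 1/(4a)$, i.e., $4ab \geq 1$, contradicting $ab < 1/4$ and forcing $f \equiv 0$.

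To check the integrability, I would expand in polar coordinates $g = ke^{iY}$ using $dg = c_n \Phi(Y)^{-2}\, dk\, dY$ and insert Gangolli's formula
\begin{equation*}
 q_{t/2}(ke^{iY}) = (2\pi t)^{-n/2} e^{-2t|\rho|^2} e^{-|Y|^2/(2t)} \Phi(Y).
\end{equation*}
Two factors of $\Phi$ cancel. Applying the hypothesis $|f(ke^{iY})|^2 \leq C^2 e^{2a|Y|^2}$ and carrying out the trivial integration over the compact factor $K$, matters reduce to verifying
\begin{equation*}
 \int_{\mathbf{k}} e^{-\delta|Y|^2} \Phi(Y)^{-1}\, dY < \infty, \qquad \delta := \frac{1}{2t} - 2a,
\end{equation*}
where $\delta > 0$ precisely because $t < 1/(4a)$.

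The main technical point, and the only place requiring genuine care, is the growth of $\Phi(Y)^{-1}$. On the Cartan subalgebra $\mathbf{t}$ it factors as $\prod_{\alpha \in R^+} \sinh(\alpha,H)/(\alpha,H)$; each $\sinh$ contributes at most linear-exponential growth in $|H|$, while $\sinh(x)/x$ is smooth on $\R$, so the $\operatorname{Ad}(K)$-invariant extension to $\mathbf{k}$ obeys a pointwise bound of the form $\Phi(Y)^{-1} \leq C' e^{c|Y|}$ for some constants $c, C' > 0$. The strict quadratic decay $e^{-\delta |Y|^2}$ then dominates the linear-exponential factor and the integral converges. With this estimate established, the remainder of the argument proceeds exactly as in the proof of Theorem 2.1.
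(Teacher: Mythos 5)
Your argument is correct, but it follows a genuinely different route from the one the paper has in mind. The paper offers no separate proof of Theorem~2.2: it intends to \emph{reduce} it to Theorem~2.1 via ``known estimates on the heat kernel,'' namely the Gaussian growth $p_{2a'}(e^{2iY})\gtrsim e^{|Y|^2/(2a')}$ of the compact heat kernel in the imaginary directions, so that when $ab<1/4$ one can pick $a'$ with $b<a'\le 1/(4a)$, convert $|f(ke^{iY})|\le Ce^{a|Y|^2}$ into $|f(ke^{iY})|^2\le C\,p_{2a'}(e^{2iY})$, and quote Theorem~2.1. You instead re-run the engine of Theorem~2.1 directly: you establish the key integrability $\int_G|f|^2q_{t/2}\,dg<\infty$ for all $t<1/(4a)$ straight from Gangolli's explicit formula, which works precisely because your hypothesis is already an explicit Gaussian rather than a heat-kernel majorant. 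This bypasses both Lassalle's Gutzmer formula and the lower bound for $p_{2a}(e^{2iY})$; the only analytic input left is the elementary estimate $\Phi(H)^{-1}=\prod_{\alpha\in R^+}\sinh(\alpha,H)/(\alpha,H)\le e^{c|H|}$ on $\mathbf{t}$ (e.g.\ from $\sinh x/x\le\cosh x\le e^{|x|}$), extended to $\mathbf{k}$ by $\mathrm{Ad}(K)$-invariance, after which $e^{-\delta|Y|^2}$ with $\delta=\tfrac{1}{2t}-2a>0$ dominates. The remaining steps (Hall's theorem gives $f=h*p_t$, hence $\|\pi(f)\|_{HS}=e^{-t\lambda_\pi^2}\|\pi(h)\|_{HS}\le\|h\|_2\,e^{-t\lambda_\pi^2}$ by Plancherel, hence $t\le b$ and so $b\ge 1/(4a)$) are structurally identical to the paper's, including the reading of the conclusion as ``the hypotheses force $ab\ge 1/4$.'' Your version is more self-contained and makes the constant $1/4$ transparent; the paper's reduction is shorter but leans on nontrivial two-sided estimates for $p_t$ on the complexification.
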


\begin{cor} Suppose $ F $ is a holomorphic function on $ G = K_{\C} $ which is of exponential type, i.e., it satisfies
$$ |F(ke^{iY})| \leq C e^{a|Y|},~~~~ k \in K, Y \in \bf{k}.$$ Let $ f $ be the restriction of $ F $ to $ K.$ Then 
$ \|\pi(f)\|_{HS} \leq C e^{- t\lambda_\pi^2}, \forall \pi  \in \hat{K} $ for all $ t > 0.$
\end{cor}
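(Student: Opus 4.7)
The strategy is to deduce the corollary directly from Theorem 2.2 by treating an exponential-type bound as a family of Gaussian-type bounds with arbitrarily small quadratic coefficient. The elementary inequality
$$ a|Y| \le \eps |Y|^2 + \frac{a^2}{4\eps}, \qquad Y \in \mathbf{k}, $$
valid for every $\eps > 0$, allows one to absorb the linear exponent $a|Y|$ into a Gaussian of arbitrarily small width, at the cost of enlarging the multiplicative constant.

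Applied to the hypothesis $|F(ke^{iY})| \le C e^{a|Y|}$, this gives
$$ |f(ke^{iY})| \le C_\eps\, e^{\eps |Y|^2} \quad \text{for every } \eps > 0, $$
with $C_\eps = C e^{a^2/(4\eps)}$. Hence the restriction $f$ satisfies the holomorphic Gaussian bound of Theorem 2.2 with the constant $a$ there replaced by the arbitrarily small $\eps$.

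Assume for contradiction that $f \ne 0$. Theorem 2.2 then forces $\eps b \ge 1/4$, that is $b \ge 1/(4\eps)$. Since $\eps > 0$ is arbitrary we obtain $b = +\infty$, which by the definition of $b$ means that for every $t > 0$ there is a constant $C_t$ with $\|\pi(f)\|_{HS} \le C_t\, e^{-t\lambda_\pi^2}$ for every $\pi \in \hat K$. The case $f = 0$ is trivial.

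The main step is this reduction, and I do not anticipate any real obstacle beyond it: the analytic content has already been packaged into Theorem 2.2, whose proof drew on the Gangolli heat-kernel formula, Hall's Segal--Bargmann theorem, and Lassalle's Gutzmer-type formula. What remains is the purely algebraic observation that linear exponential growth in $|Y|$ is an admissible Gaussian bound at every positive scale, so Theorem 2.2 kills every nonzero $f$ unless the dual-side decay rate is unlimited.
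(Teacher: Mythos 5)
Your proof is correct and is the intended derivation: the paper states the corollary without proof as an immediate consequence of Theorem 2.2, and the reduction via $a|Y|\le \eps|Y|^2+a^2/(4\eps)$, forcing $\eps b\ge 1/4$ for every $\eps>0$ and hence $b=+\infty$, is exactly the evident argument.
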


\begin{rem} In the equality case of Theorem 2.1,  that is when $ a = b , $ we cannot conclude that $ f = c~  p_a.$ To see this, consider the function $ f $ on $ K= S^1 $ 
whose Fourier expansion is given by
$$ f(x) = \sum_{k=-\infty}^\infty e^{-c|k|-ak^2} e^{ikx}.$$ Then it can be easily checked that the holomorphic extension of $ f $ to $ \C $ satisfies
$$ |f(x+iy)|^2 \leq C p_{2a}(2iy).$$
However, from the proof of the above theorem we see that when $ a=b $ we can write $ f $ as $ h*p_t $ for any $ t < a.$
\end{rem}

\begin{rem} An analogue of Theorem 2.1 is true for any Riemannian symmetric space of compact type. In \cite{St} Stenzel has studied the Segal-Bargmann transform on compact symmetric spaces and proved an analogue of Hall's theorem. If $ p_t $ is the heat kernel assocaited to the Laplace -Beltrami operator on a compact symmetric spaces $ X $ then the image of $ L^2(X) $ under the Segal-Bargmann transform is a weighted Bergman space. The weight function is given by 
$ q_{2t}(e^{2iH}) $ where $ q_t $ is the heat kernel associated to the Laplace-Beltrami operator on the noncompact dual of $ X,$  see Faraut \cite{F} and also \cite{T3}. We leave the formulation and proof of a Hardy's theorem on $ X $ to the interested reader.
\end{rem}

\section{Hardy's theorem on Euclidean spaces revisited}
\setcounter{equation}{0}

For functions on $ \R^n $ the assumption $ |\hat{f}(\xi)| \leq C e^{-b|\xi|^2} $ leads to teh estimate 
$ |f(x+iy)| \leq C p_b(iy)$  where $ p_t $ is the Euclidean heat kernel. However, due to the noncompactness of $ \R^n $ these two estimates cannot be used as in the case of compact Lie groups to 
prove a version of Hardy's theorem. On the other hand, when  $ g $ is a Schwartz function the holomorphic extension of  $ f = g*p_a $ 
satisfies the estimates
$$ |f(x+iy)|^2 \leq C_m (1+|x|^2+|y|^2)^{-m} p_{2a}(2iy) $$ for every non-negative integer $m $ as shown by Bargmann in \cite{B}. This allows us to prove the following  version of Hardy's theorem.\\

\begin{thm}  Let $ b = sup \{ t\geq0: |\hat{f}(\xi)| \leq C e^{-t|\xi|^2} \} $  for a Schwartz class function $ f.$ Assume that $ f $ has a holomorphic extension to $ \C^n $ and satisfies the estimates
 $$  |f(x+iy)|^2  \leq C_m  (1+|x|^2+|y|^2)^{-m} p_{2a}(2iy) $$ for every non-negative integer $m.$ 
 Then for $ f = 0$ whenever $ a > b.$
\end{thm}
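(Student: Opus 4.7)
The plan is to follow the blueprint of Theorem~2.1, with the Euclidean Segal-Bargmann transform (Bargmann \cite{B}, Hall \cite{H1}) playing the role of Hall's theorem on $K$. The first goal is to verify that for every $0 < t < a$,
$$\int_{\C^n} |f(x+iy)|^2\, e^{-|y|^2/(2t)}\, dx\, dy < \infty,$$
which is the Euclidean counterpart of estimate~(2.1). Since $p_{2a}(2iy) = (8\pi a)^{-n/2} e^{|y|^2/(2a)}$, the hypothesis bounds the integrand by $C_m (1+|x|^2+|y|^2)^{-m}\, e^{-|y|^2(1/t-1/a)/2}$, which is integrable for $m$ large: the polynomial factor handles the $x$-variable, and the Gaussian with positive exponent $(1/t-1/a)/2$ handles $y$. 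The Bargmann--Hall theorem then identifies $f$ as the holomorphic extension of $h*p_t$ for some $h \in L^2(\R^n)$; equivalently $\hat f(\xi) = \hat h(\xi)\, e^{-t|\xi|^2}$ with $\hat h \in L^2(\R^n)$.

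In the compact setting one would now read off Hilbert--Schmidt decay of $\pi(f)$ and compare against $b$. Here the definition of $b$ requires a \emph{pointwise} bound on $\hat f$, so mere $L^2$ control of $\hat h$ is not quite enough. I would close this gap with a direct contour shift. From the hypothesis,
$$|f(x+iy)| \leq C_m (1+|x|^2)^{-m/2}\, e^{|y|^2/(4a)},$$
uniformly on each horizontal strip of $\C^n$; for $m > n$ this justifies, via Cauchy's theorem, the identity
$$\hat f(\xi) = (2\pi)^{-n/2}\int_{\R^n} f(x-iy_0)\, e^{-i(x-iy_0)\cdot\xi}\, dx$$
for any $y_0 \in \R^n$. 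Taking absolute values gives $|\hat f(\xi)| \leq C\, e^{-y_0\cdot\xi + |y_0|^2/(4a)}$, and optimizing at $y_0 = 2a\xi$ produces $|\hat f(\xi)| \leq C\, e^{-a|\xi|^2}$. By the definition of $b$ this forces $b \geq a$, contradicting $a > b$ unless $f \equiv 0$.

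The principal obstacle is the mismatch between the $L^2$ output of the Segal--Bargmann step (the natural analog of the compact proof) and the pointwise decay required by the definition of $b$; the contour shift resolves this mismatch and is in fact the real engine of the argument. One could equally well dispense with Bargmann--Hall altogether and carry out only the contour-shift/optimization step---essentially Hardy's original one-variable proof, available in several variables precisely because of the pointwise decay factor $(1+|x|^2+|y|^2)^{-m}$ that Bargmann built into the hypothesis in \cite{B}.
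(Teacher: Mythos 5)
Your argument is correct, but it takes a genuinely different route from the paper. The paper's proof is a one-line appeal to Bargmann's characterization theorem: the stated estimates (for \emph{every} $m$) hold if and only if $f = g*p_a$ for a Schwartz function $g$, whence $\hat{f}(\xi) = \hat{g}(\xi)e^{-a|\xi|^2}$ with $\hat{g}$ bounded, forcing $a \leq b$. You instead prove the pointwise bound $|\hat{f}(\xi)| \leq C e^{-a|\xi|^2}$ directly by shifting the contour to $\R^n - iy_0$ and optimizing at $y_0 = 2a\xi$; the computation $-2a|\xi|^2 + |2a\xi|^2/(4a) = -a|\xi|^2$ is right, the shift is justified by the uniform decay $(1+|x|^2)^{-m/2}$ on horizontal strips for a single $m>n$, and the resulting constant is independent of $\xi$. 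This is more elementary (no need for the ``only if'' half of Bargmann's theorem) and in fact slightly stronger, since it uses the hypothesis for only one value of $m$ rather than all of them. Your diagnosis of the mismatch in the first paragraph is also on target: the $L^2$ Segal--Bargmann step, which is the exact analogue of the paper's proof of Theorem 2.1 and of Theorem 3.2, yields only $\hat{f} = \hat{h}\,e^{-t|\xi|^2}$ with $\hat{h} \in L^2$, which does not meet the pointwise definition of $b$ used in Theorem 3.1; you are right that this detour can simply be discarded. (Two immaterial slips: with the paper's normalization $p_{2a}(2iy) = (4\pi a)^{-n/2}e^{|y|^2/(2a)}$, not $(8\pi a)^{-n/2}e^{|y|^2/(2a)}$; and the conclusion is better phrased as ``the hypotheses force $a \leq b$, so the case $a>b$ cannot occur for a function satisfying them,'' which is exactly the logical shape of the paper's own conclusion.)
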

\begin{proof} The proof depends on Bargmann's theorem which states that an entire function $ f $ satisfies the hypothesis of the theorem if and only if it is of the form $ g*p_a $ for a Schwartz function $ g $. But then the hypothesis on $ \hat{f} $ forces $ a $ to be at most $ b.$
\end{proof}

The following $ L^2 $ version of the above result can be considered as a reformulation of Cowling-Price theorem (see \cite{T1}).\\

\begin{thm}  Let $ b = sup \{ t\geq 0:  \hat{f}(\xi) = g_t(\xi)e^{-t|\xi|^2}, g_t \in L^2(\R^n) \} $ for a function $ f \in L^2(\R^n).$ Assume that $ f $ has a holomorphic extension to $ \C^n $ and satisfies the estimate
 $$ \int_{\R^n} |f(x+iy)|^2 dx \leq C p_{2a}(2iy).$$ 
Then for $ f = 0 $ whenever $ a > b.$
\end{thm}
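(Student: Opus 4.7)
The plan is to mimic the strategy of Theorem 2.1, with the role of Hall's isometry on $K$ played by the classical Segal--Bargmann / Bargmann isometry on $\R^n$, and the role of Lassalle's formula replaced by a plain Plancherel computation that averages out $x$. Specifically, I will show that the pointwise bound in $y$ on the slice $L^2$ norms of $f$ forces $f$ to lie in the image of the Segal--Bargmann transform $B_t$ for every $t<a$, thereby producing an $L^2$ function $g_t$ with $\hat{f}(\xi)=g_t(\xi)e^{-t|\xi|^2}$.

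First I would recall the Euclidean version of Hall's theorem: an entire function $F$ on $\C^n$ is the holomorphic extension of $h*p_t$ for some $h\in L^2(\R^n)$ if and only if
\begin{equation*}
\int_{\C^n} |F(x+iy)|^2\, W_t(y)\, dx\, dy < \infty,
\end{equation*}
where $W_t(y)=c_n t^{-n/2}e^{-|y|^2/(2t)}$ is the Bargmann weight, and in that case the integral equals $\|h\|_{L^2}^2$ up to a constant. (The exponent $1/(2t)$ in $W_t$ is fixed by the requirement that $\int W_t(y)e^{-2y\cdot\xi}\,dy = e^{2t|\xi|^2}$, which is the content of the Plancherel identity after writing out $f=h*p_t$ on the Fourier side.)

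Next I would plug in the hypothesis. Using $p_{2a}(2iy)=(4\pi a)^{-n/2}e^{|y|^2/(2a)}$ and the assumed bound $\int_{\R^n}|f(x+iy)|^2\,dx\leq C\,p_{2a}(2iy)$, for any $t<a$ I get
\begin{equation*}
\int_{\C^n}|f(x+iy)|^2 W_t(y)\,dx\,dy
\leq C\int_{\R^n} p_{2a}(2iy)\,W_t(y)\,dy
\leq C'\int_{\R^n} e^{-|y|^2\left(\frac{1}{2t}-\frac{1}{2a}\right)}\,dy,
\end{equation*}
and the last Gaussian is integrable precisely because $t<a$. By the Bargmann characterization, $f=h_t*p_t$ for some $h_t\in L^2(\R^n)$, hence $\hat{f}(\xi)=\hat{h}_t(\xi)e^{-t|\xi|^2}$ with $\hat{h}_t\in L^2(\R^n)$. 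By the definition of $b$ this shows $t\leq b$; letting $t\uparrow a$ gives $a\leq b$, contradicting $a>b$ unless $f=0$.

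The argument is essentially routine once the weight matching $p_{2a}(2iy)$ against $W_t(y)$ has been observed, which is the whole point of the reformulation. The only real step to be careful about is the identification of the correct Gaussian weight in Hall's Euclidean theorem and its relation to $p_{2t}(2iy)^{-1}$, so that the hypothesis on $f$ and the Bargmann norm match in a way that makes the integration in $y$ converge exactly at the threshold $t<a$. This is not an obstacle so much as a bookkeeping task, since it reduces to a single Gaussian integral.
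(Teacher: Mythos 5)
Your proposal is correct and follows essentially the same route as the paper: for each $t<a$ one checks that the hypothesis $\int_{\R^n}|f(x+iy)|^2\,dx\leq C\,p_{2a}(2iy)$ makes the holomorphic extension square integrable against the Segal--Bargmann Gaussian weight $c_n t^{-n/2}e^{-|y|^2/(2t)}$ (the Gaussian integral converging precisely because $t<a$), so that $f=h_t*p_t$ with $h_t\in L^2$, whence $t\leq b$ and $a\leq b$. The paper states this in one line; you have simply supplied the weight-matching computation it leaves implicit.
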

\begin{proof} We only need to observe that for any $ t < a , f(x+iy) $ is square integrable with respect to $ p_{2t}(2iy) $ and hence by the theorem of Segal and Bargmann 
we have $ f = h_t*p_t $ for some $ h_t \in L^2(\R^n).$ 
\end{proof}

We also have the following theorem which can be considered as a reformulation of Hardy's theorem for Hermite expansions. By rescaling, the decay rate $ b $ of the Fourier transform of the function $ f $ can be assumed to be at most $ 1/2.$ With this modification we have the following result. We let $ H $ stand for the 
Hermite operator $ -\Delta+|x|^2 $ and write its spectral projection as $ H = \sum_{k=0}^\infty (2k+n) P_k$, see \cite{T2}.\\

\begin{thm} Let $ b = sup \{ t>0: |\hat{f}(\xi)| \leq C e^{-\frac{1}{2}(\tanh 2t)|\xi|^2} \} $ for a  function $ f \in L^2(\R^n).$ Assume that $ f $ has a holomorphic extension to $ \C^n $ and satisfies the estimates
 $$  |f(x+iy)|  \leq C e^{-\frac{1}{2}(\tanh 2a)|x|^2+\frac{1}{2}(\coth 2a)|y|^2} . $$ Then for $ f $ to be nontrivial it is necessary that $ a \leq b.$ Moreover, for every $ t < a $ the Hermite projections of $ f $ have the decay $ \|P_kf\|_2 \leq C e^{-(2k+n)t}.$

\end{thm}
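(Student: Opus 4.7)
The plan is to mirror the strategy of Theorems 2.4 and 2.5, with the Hermite semigroup $ e^{-tH} $ taking over the role played by the Euclidean heat semigroup. Two transitions drive the argument: first, the pointwise majorant on $ f $ should force $ f $ into the image of $ L^2(\R^n) $ under $ e^{-tH} $ for every $ t < a,$ which at once produces the Hermite coefficient decay $ \|P_k f\|_2 \leq C e^{-(2k+n)t};$ second, this coefficient decay should translate, via Mehler's formula, into the claimed pointwise bound on $ \hat f,$ giving $ b \geq t $ and hence $ b \geq a.$

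For the first transition I would invoke a Bargmann-type description of the image $ e^{-tH}(L^2(\R^n)) $ as a weighted Bergman space on $ \C^n,$ whose weight is essentially the reciprocal of the pointwise majorant appearing in the hypothesis. Checking that the hypothesis implies the required weighted $ L^2 $ finiteness for every $ t < a $ then produces $ g_t \in L^2(\R^n) $ with $ f = e^{-tH} g_t,$ whence $ P_k f = e^{-(2k+n)t} P_k g_t $ and the Hermite projection bound follows, establishing the quantitative half of the conclusion. This step parallels the use of Hall's theorem in the proof of Theorem 2.1.

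For the second transition, the Hermite functions $ \phi_\alpha $ are eigenfunctions of the Fourier transform, so $ \|P_k\hat f\|_2 = \|P_k f\|_2.$ Writing $ \hat f(\xi) = \sum_\alpha (-i)^{|\alpha|}\langle f,\phi_\alpha\rangle\phi_\alpha(\xi) $ and applying Cauchy--Schwarz with weight $ e^{(2|\alpha|+n)s},$ one obtains
$$ |\hat f(\xi)|^2 \leq \Big(\sum_\alpha e^{(2|\alpha|+n)s}|\langle f,\phi_\alpha\rangle|^2\Big)\, K_s(\xi,\xi), $$
where the diagonal Mehler kernel satisfies $ K_s(\xi,\xi) \leq C(\sinh 2s)^{-n/2}e^{-(\tanh s)|\xi|^2}.$ The first factor is finite precisely when $ s < 2t,$ so letting $ s \uparrow 2t $ gives $ |\hat f(\xi)| \leq C_\eps e^{-\frac{1}{2}\tanh(2t-\eps)|\xi|^2} $ for every $ \eps > 0,$ hence $ b \geq t.$ Since $ t < a $ is arbitrary, $ b \geq a,$ contradicting $ a > b $ unless $ f \equiv 0.$

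The main obstacle is the Bargmann step: one must identify the precise Mehler-type weight matching the stated pointwise bound (with $ \tanh 2a $ in the real direction and $ \coth 2a $ in the imaginary direction) and verify that the hypothesis yields square-integrability against that weight for every $ t < a.$ Once this weighted Bergman characterization of $ e^{-tH}(L^2(\R^n)) $ is in hand, the rest reduces to the Mehler-kernel calculation sketched above, directly analogous to the Gutzmer--Lassalle step in the compact-group proof.
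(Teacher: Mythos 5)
Your plan is sound and its first half coincides with the paper's proof: both rest on the characterization from \cite{T2} of $e^{-tH}(L^2(\R^n))$ as a weighted Bergman space, from which $f=e^{-tH}g_t$ and hence $\|P_kf\|_2=e^{-(2k+n)t}\|P_kg_t\|_2\leq Ce^{-(2k+n)t}$ for every $t<a$. Your second half takes a genuinely different route. The paper transfers the semigroup representation to $\hat f=e^{-tH}\hat g_t$ (using that $e^{-tH}$ commutes with the Fourier transform) and then quotes the pointwise bound of \cite{RT} for $e^{-tH}\hat g_t(x+iy)$, restricting to real $\xi$ to obtain $|\hat f(\xi)|\leq Ce^{-\frac12(\tanh 2s)|\xi|^2}$ for $s<t$. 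Your Cauchy--Schwarz against the diagonal Mehler kernel is self-contained and slightly sharper: since $\sum_\alpha e^{(2|\alpha|+n)\cdot 2t}|\langle f,\phi_\alpha\rangle|^2=\|g_t\|_2^2<\infty$, you may take $s=2t$ exactly rather than letting $s\uparrow 2t$, and $K_{2t}(\xi,\xi)=c_n(\sinh 4t)^{-n/2}e^{-(\tanh 2t)|\xi|^2}$ then gives $|\hat f(\xi)|\leq Ce^{-\frac12(\tanh 2t)|\xi|^2}$ on the nose, hence $b\geq t$ for every $t<a$. Both routes are correct; yours avoids the external citation.

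The one step you defer --- verifying square-integrability of $|f|^2$ against the weight for every $t<a$ --- does close, but only once the weight is written correctly, and here a warning is in order. The weight for $e^{-tH}(L^2(\R^n))$ is $U_t(x,y)=c_n(\sinh 4t)^{-n/2}e^{(\tanh 2t)|x|^2-(\coth 2t)|y|^2}$, i.e.\ exactly the reciprocal of the squared majorant in the hypothesis at parameter $t$, as your heuristic predicts. (The display of $U_t$ in the paper carries extra factors of $\tfrac12$ in the exponents; these are inconsistent with the isometry it is supposed to satisfy --- test it on $\phi_0$, whose holomorphic extension has $|\phi_0(x+iy)|^2=\pi^{-n/2}e^{-|x|^2+|y|^2}$ --- and with those factors the integrability check would genuinely fail for $t$ near $a$, since one would need $\coth 2t>2\coth 2a$.) With the correct weight the verification is one line: for $t<a$ one has $\tanh 2t<\tanh 2a$ and $\coth 2t>\coth 2a$, so $|f(x+iy)|^2U_t(x,y)\leq Ce^{-(\tanh 2a-\tanh 2t)|x|^2-(\coth 2t-\coth 2a)|y|^2}$, a convergent Gaussian. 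Add this computation and your argument is complete.
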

\begin{proof} For the proof we need the following characterisation of the image of $ L^2(\R^n) $ under the Hermite semigroup. If $ f \in L^2(\R^n) $ then 
$e^{-tH}f $ extends to $ \C^n $ as a holomorphic function and we have the identity
$$ \int_{\C^n} |e^{-tH}f(x+iy)|^2 U_t(x,y) dx dy = \int_{\R^n} |f(x)|^2 dx $$ where the weight function $ U_t $ is given explicitly by
$$ U_t(x,y) = c_n (\sinh(2t))^{-n/2} e^{\frac{1}{2}(\tanh 2t)|x|^2-\frac{1}{2}(\coth 2t)|y|^2} .$$ And the converse is also true, see \cite{T2}. Under our assumption 
on $ f $ we see that for any $ t < a $ the holomorphic function $ f(x+iy) $ is square integrable with respect to $ U_t(x,y) dx dy $ and hence $ f(x) = e^{-tH}g(x) $ 
for some $ g \in L^2(\R^n).$ As $ e^{-tH} $ commutes with the Fourier transform we have $ \hat{f} = e^{-tH}\hat{g}.$ In \cite{RT} the authors have shown that  the holomorphic function $ \hat{f}(x+iy) = e^{-tH}\hat{g}(x+iy) $ satisfies the estimate 
$$ |\hat{f}(x+iy)| \leq C e^{-\frac{1}{2}(\tanh 2s)|x|^2+\frac{1}{2}(\coth 2s)|y|^2} $$ for any $ s < t.$ Consequently, we get $ s < b $ and as this is true for any 
$ s<t<a $ we conclude that $ a \leq b $ as claimed.
\end{proof}


 \end{document}